\newtheorem{theorem}{Theorem}
\newtheorem{lemma}{Lemma}
\newtheorem{proposition}{Proposition}
\newtheorem{corollary}{Corollary}
\newtheorem*{definition}{Definition}
\theoremstyle{definition}
\newcommand {\D}{\mathcal D}
\newcommand {\lm}{\lambda}
\newcommand{\cal}{Calder\'{o}n }
\renewcommand*\@makechapterhead[1]{%
  \vspace*{0.75in}%
  {\parindent \z@ \raggedright \normalfont
    \ifnum \c@secnumdepth >\m@ne
        \huge\bfseries \@chapapp\space \thechapter
        \par\nobreak
        \vskip 20\p@
    \fi
    \interlinepenalty\@M
    \Huge \bfseries #1\par\nobreak
    \vskip 40\p@
  }}
\renewcommand*\@makeschapterhead[1]{%
  \vspace*{0.75in}%
  {\parindent \z@ \raggedright
    \normalfont
    \interlinepenalty\@M
    \Huge \bfseries  #1\par\nobreak
    \vskip 40\p@
  }}
\theoremstyle{plain}
\theoremstyle{definition}
\def\bigskip{\vspace{30pt}}
\begin{document}
% makes page numbers appear
\pagestyle{plain} 
\title{Adiabatic Limit of Calder\'{o}n Projector on Manifold with Cylindrical End}
\author{Kunal Sharma}
\date{}

\begin{abstract}
For a Reimannian manifold with a cylindrical end, consider a Dirac-type operator that is asymptotically product type with the generalized Atiyah---Patodi---Singer boundary condition on any finite portion of the cylinder. In the present work we consider the problem of constructing the \cal projector in this setting and studying the adiabatic limit of it along the cylindrical end. As a consequence, we extend a result of Nicolaescu \cite{N} on adiabatic limits of Cauchy data spaces. The proof leverages resolvent and its estimates in the framework of the $b$-calculus needed for the construction of the \cal projector corresponding to our Dirac-type operator. 
\end{abstract}

\maketitle

\tableofcontents

% Changes page numbers to regular numbers, resets the counter
%\mainmatter

% This gives 11pt font with 20pt spacing, text from here should be double spaced
\fontsize{11}{20pt} \selectfont  

% \include puts in the .tex file with the given name
% make sure that these files don't have any preamble material

%%%%%%%%%   INTRODUCTION %%%%%%%%%

\section{Introduction}

The seminal work of Atiyah---Patodi---Singer(\cite{APS}) demonstrated the need to have global boundary conditions for the study of Dirac-type operators. On the other hand, the study of elliptic operators on manifold with boundary with local boundary conditions had been studied before for example by Lions and Magenes(\cite{LM}), Hormander(\cite{Ho}) and Boutet de Monvel(\cite{B}). In particular, in \cite{B}, Boutet de Monvel identified conditions on symbols of pseudo-differential operators to study Fredholmness followed by an ingenious calculation of the index using K-theory. To set up a calculus of pseudo-differential operators analogous to \cite{B} and study boundary value problems for Dirac-type operators in a systematic way, a projection operator on the $L^2$ space of the boundary is needed. Such an operator was constructed by Seeley in \cite{S1}, extending the work of \cal, and independently by Hormander in \cite{Ho2}. We will henceforth refer to this projection operator as the \cal projector. For details about constructing such projectors and reformulations of boundary conditions in Atiyah---Patodi---Singer's work(\cite{APS}) along with `generalized APS' boundary conditions in the context of \cal projectors, we will rely and refer the reader to the wonderful manuscript by Booß-Bavnbek and Wojciechowski \cite{BB}. In the setting of compact manifold with boundary, the construction of \cal and treatment of boundary value problems for elliptic operators under more relaxed conditions can be found in the work of Booß-Bavnbek et al. (\cite{BARB}) and Bär et al. (\cite{BLZ} and \cite{BARBH}) respectively. \par

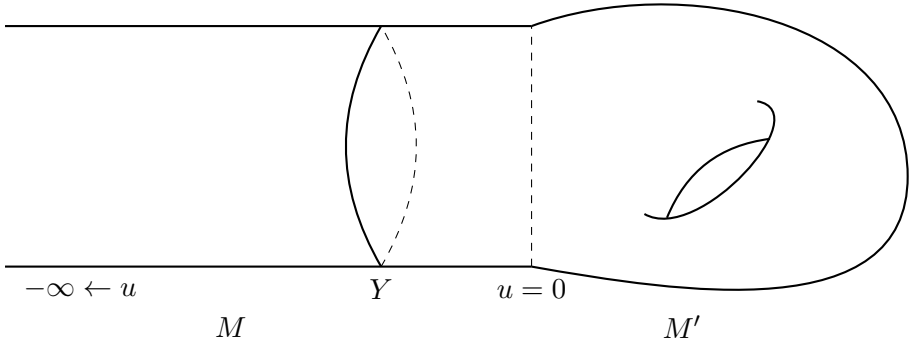
\begin{figure}[h!]
  \begin{center}
    \begin{tikzpicture}
       \draw [thick] (1,2) % Draws M'
       to [out=20,in=90] (6,0) 
       to [out=-90,in=-10] (1,-1.2);    
      % Draw the hole
      \draw [thick] (2.5,-0.5) to [out=-30, in=-10] (4,1);
      \draw [thick] (2.8,-0.55) to [bend left] (4.17, 0.5);
      % Draw cylindrical part
      \draw [thick](1,2) -- (-6,2);
      \draw [thick] (1, -1.2) -- (-6,-1.2);
      % Draw Y
      \draw (-1,-1.5) node {$Y$};
      \draw [thick] (-1,-1.2) to [bend left] (-1,2);
      \draw [dashed] (-1,-1.2) to [bend right] (-1,2);
      % Draw nodes
      \draw [dashed] (1,-1.2) -- (1,2);
      \draw (1,-1.5) node {$u=0$};
      \draw (-5,-1.5) node {$- \infty \leftarrow u$};
      \draw (3,-2) node {$M'$};
      \draw (-3,-2) node {$M$};
    \end{tikzpicture}
    \caption{Manifold with cylindrical end}
  \end{center}
\end{figure}

The problem we address in this work is that of studying \cal  projectors for Dirac-type operators on manifolds with an asymptotically cylindrical end (see Figure 1). If $M$ is such a manifold then it admits a decomposition $M =  ((-\infty,0]_u \times Y) \cup M'$ where $M'$ is compact with boundary $Y$ and the Riemannian metric $g$ on $(-\infty,0]_u \times Y$ takes the form \[ g = du^2 + h_u, \text{  }  h_u \sim h_Y + e^uh_1 + e^{2u}h_2 + \cdots  \text{ as u} \to -\infty, \]  where $h_Y$ is a metric on $Y$. Therefore, $g \to du^2 + h_Y$ as $u \to -\infty$. A Dirac-type operator acting between sections of a Hermitian bundle $E$ over $M$ that's non-product type is given as $\D = G(\partial_u + \D_0(u))$ over the cylindrical end $(-\infty, 0]_u \times Y$, where $G$ is a unitary map and $\D_0(u)$ is a Dirac-type operator acting on $C^{\infty}(Y,E_0)$, where $E_0$ is $E$ restricted to $\{u=0\}$.  Here $\D_0(u)$ has an expansion similar to the one of $g$ as $u \to -\infty$ and given as, \[  \D_0(u) \sim D_0 + e^u D_1 + e^{2u} D_2 + \cdots, \text{ where } D_j \in \text{Diff}^1(Y, E_0). \] 
Let $M_r = ([r,0] \times Y) \cup M' $ for which we consider a parameter dependent \cal projector $\mathcal C (r)$, corresponding to the Dirac operator $\D$, which by definition is an orthogonal projection onto the Cauchy data space at $u =r$,  \[ \mathcal H(r) = \overline{ \{ \phi|_{u =r}| \phi \in C^{\infty}(M_r,E),\D \phi =0\} } \subset L^2(Y,E_0). \] 
The question the present work addresses is to compute the adiabatic limit of the \cal projector i.e. to compute $\lim_{r \to -\infty} \mathcal C(r).$ More precisely, assume the `generalized APS' boundary condition $\Pi_{APS}= \Pi_{>} + \Pi_{sc}$, where $\Pi_>$ is the positive spectral projection of $D_0$ and $\Pi_{sc}$ is projection on the \textit{scattering Lagrangian}, $L \subset ker D_0$. As $M$ is asymptotically cylindrical we expect that $\mathcal C(r) \to \Pi_{APS}$ as $r \to -\infty$. This conjecture is also motivated by work of Nicolaescu(\cite{N}) wherein the asymptotic limit of the Cauchy data space was studied for a manifold with cylindrical end. There are some differences however between our work and \cite{N}. Firstly, our metric $g$ is asymptotically cylindrical and not pure product-type. Explicitly, it looks like $\D = G(\partial_u + \D_0(u))$ where $\D_0(u)$ is smooth in $u$. The Dirac-type operator assumed in \cite{N} on the other hand is `cylindrical' i.e. on the cylindrical part of the manifold it looks like $\D = G(\partial_u + \D_0)$ where  $\D_0$ is independent of cylindrical variable $u$. A much more crucial difference is in the approach: Recall $M(r) = ( [r, 0] \times Y) \cup M'$ and let $\mathcal H(r)$ be defined as above. In \cite{N} the objective was to study $\lim_{r \to -\infty} \mathcal H(r)$, whereas we instead consider the limit of the \cal projector $\lim_{r \to -\infty} \mathcal C(r)$. Now $\mathcal H(r) \subset L^2(Y,E_0)$ and on the cylinder $\D = G(\partial_u + \D_0)$, so formally speaking we have $\mathcal H(r) = \mathcal H(0)e^{-\D_0 r}$, which is a family of Lagrangian spaces. Taking the limit requires studying dynamics on the space of infinite dimensional Lagrangian Grassmanians. Our approach of dealing with the \cal projector, we believe, is more direct and robust for we study convergence of operators rather than the converges of spaces. As a consequence of continuity of the \cal projector the result on the adiabatic limit of the Cauchy data spaces (Corollary 4.11 in \cite{N}) follows. Moreover, instead of working with $C^{\infty}$ spaces we work with \textit{symbol spaces}. Roughly speaking, this space sits between the category of smooth functions and spaces of polyhomogenous conormal distributions as defined in \cite{M}. In addition, this approach is applicable to more singular spaces. The main tool we use to deal with the analysis is Melrose's $b$-calculus. In this setting the adiabatic limit is equivalent to the limit of kernels of pseudodifferential operators in the product space $M^2$. \par

This work is organized as follows: The first section sets up the necessary notations and introduces core definitions related to the `$b$-calculus', mainly that of $b$-pseudodifferential operators and blow-up. Then a construction of the \cal projector is provided based on the resolvent estimates. Due to the latter, our construction doesn't rely on an invertible double for $\D$. Finally, we prove that the adiabatic limit of the \cal projector is the generalized APS projector for both the product case and asymptotically cylindrical case and we also provide the associated rates of convergence. 
 
 \subsection{Acknowledgements}
I would like to thank Paul Loya for several helpful discussions and in general for introducing me to the area of boundary value problems for elliptic operators.

%%%%%%%%%  BACKGROUND AND NOTATION %%%%%%%%%%%%%

\section{Background and Notation}

%% Subsection - Blow up %%
\subsection{Blow-up}
In order to deal with the singularities of operators on $M$ it's convenient to instead work with a `compactified' manifold obtained by change of variables over the cylindrical part, \[ (-\infty, 0] \ni u \to x \coloneqq e^u \in (0,1]. \] Consequently, $x = 0$ represents the `boundary at infinity'. This allows us to leverage the $b$-calculus of Melrose(\cite{M}), aspects of which we briefly recall now.\par 

Let $X$ denote the compactified manifold and consider the product manifold $X^2$ with the corner $\partial X \times \partial X \eqqcolon C$ represented by $\{ x = x' = 0 \}$ where $x,x'$ are the boundary defining functions of $X$. Here, $y$ represents the coordinates on the boundary $Y = \partial M'$ so $(x,y)$ are coordinates near the boundary of Y.  As a convention we always use the unprimed coordinates, respectively primed coordinates, as the corresponding coordinates on the left, respectively right, boundary functions of $X^2$.  If $P \in \Psi^m(X)$ is a pseudodifferential operator on $X$  then the (cornormal) singularities of its kernel lie along the diagonal, $\bigtriangleup = \{(x,y) = (x',y') \}$ and it's imperative then to study the behavior as we approach the corner $C$. This can be achieved by `blowing-up' the corner $C$ where we define the \textit{blow-up} as $X^2$ with the introduction of polar coordinates at the origin $C$ and denote the resulting manifold as $X^2_b$.

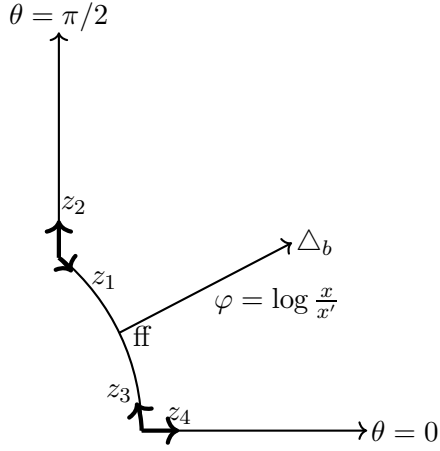
\begin{figure}
\begin{tikzpicture}
\draw[thick] (3,0) arc (0:50:3cm) ;
\draw[thick, ->] (1.9,2.3) -- (1.9,5.3) ;
\draw[thick, ->] (3,0) -- (6,0) ;
\node at (1.9, 5.5) {$\theta = \pi/2$} ;
\node at (6.5,0) {$\theta = 0$} ;
\draw[ultra thick, ->] (1.9,2.3) -- (1.9,2.8) ;
\draw[ultra thick, ->] (3,0) -- (3.5,0) ;
\draw[ultra thick, ->] (1.9,2.3) arc (50:35:1cm) ;
\node at (2.1, 3.0) {$z_2$} ;
\node at (2.5, 2) {$z_1$} ;
\draw[ultra thick, ->] (3,0) arc (0:22:1cm);
\node at (3.5, 0.2) {$z_4$} ;
\node at (2.7, 0.5) {$z_3$} ;
\draw[thick, ->] (2.7,1.30) -- (5.0,2.5);
\node at (5.3,2.5) {$\bigtriangleup_b$};
\node at (4.8, 1.7) {$\varphi = \log \frac{x}{x'}$};
\node at (3.0, 1.26) {ff};
\end{tikzpicture} 
\caption{Blow-up of $X^2$}
\end{figure}

 A much more general definition however of the blow-up in the framework of $b$-calculus with more details can be found in \cite{Me}(Sec. 4.2). If $(r, \theta) $ are the polar coordinates at $\{ x = x' = 0 \}$ then the `front face', ff,  in $X^2_b$ is \[ \text{ff} = \{ r = 0 \} \times [0, \pi/2] \times Y^2.  \]  Around the left and right boundaries of $X^2_b$, $z_i$ are the chosen projective coordinates and near left boundary for instance $(z_1, z_2) = (\frac{x}{x'}, x').$ In Figure 2, $\varphi$ denotes the logarithmic projective coordinate defined for regions away from left and right boundaries. Then, setting $\omega = (\varphi, y - y')$ defines coordinates normal to the $b$-diagonal where $\bigtriangleup_b$ is given as $\varphi = 0$.   \par

%%%% subsection- Pseudo-differential operators %%%%
\subsection{b-pseudodifferential operators}
\label{subsection:bpsdo}
As a result of the compactification of $M$, the metric over the cylindrical part of $X$ takes the form \[ g =  \frac{dx^2}{x^2} +  h_x, h_x \sim h_Y + xh_1 + x^2h_2 + \cdots \] Moreover, in regard to the $\partial_u \to x\partial_x$ transformation, the class of $b$-differential operators can be defined explicitly (we suppress the dependence on the Hermitian bundle $E$ here for notational simplicity) 
\begin{definition}($b$-differential operators)
Let $\text{Diff}^m_b(X)$ denote the class of $b$-differential operator of order $m$ on $X$. Then $P \in \text{Diff}^m_b(X)$ if in a coordinate patch near the boundary the following expression holds,
 \[ P = \sum_{\vert \alpha \vert + \vert \beta \vert \leq m}a_{\alpha \beta}(x,y)(x\partial_x)^{\alpha}\partial^\beta_y\] where $a_{\alpha \beta}(x,y)$ are smooth functions supported in that patch. 
 \end{definition}
 
Below we define ``symbol spaces'' that are function classes broader than smooth functions. These spaces are defined according to the decay rates of the functions as they approach the boundary of the manifold. They can be compared with conormal distributions as defined in Sec. 18.2 in \cite{Ho} or more directly with Sec 5.10 in \cite{Me}. 
\begin{definition}(Symbols of order $0$)
Let $X$ be as above then $f: \mathring X \to \mathbb C$ is a symbol of order zero if $\forall P \in \text{Diff}_b^*(X)$, $Pf \in L^{\infty}$. Denote the class of such functions as $S^0(X)$.
\end{definition}
Now let $\rho \in C^{\infty}(X)$ be a boundary defining function for $\partial X$ which by definition satisfies $\rho \geq 0, d\rho \neq 0$ on $\partial X = \{ \rho =0 \}$. Then the above definition can be generalized to define the following space
 \[ S^{\alpha}(X) \coloneqq \rho^{\alpha}S^0(X),   \alpha \in \mathbb R. \] 
 Next we define $S^{0,\alpha}$ space to allow control over the behavior of kernels near the boundaries and the front face in the blow-up space $X^2_b$. 
% Definition S^0,0 
\begin{definition} 
Denote by $S^{0,0}(X^2_b)$  the space of all $u$ such that:
\begin{enumerate}[label = \roman*)]
\item $u \in S^0(X^2_b)$ is continuous up to the left and right boundaries of $X^2_b$
\item $ \exists $ $ 0 < \epsilon < 1 $ such that to the front face, \[ u(r, \theta, y,y') = u_0(\theta,y,y') + r^{\epsilon}u_1(r,\theta,y,y'), \] 
where $u_0 \in S^0([0, \pi/2]\times Y^2)$ and is continuous up to $\theta = 0$ and $\theta = \pi/2$, and $u_1 \in S^0([0,1) \times [0, \pi/2] \times Y^2)$ 
\end{enumerate}
\end{definition}
% Def S^0,\alpha 
\begin{definition}
For $\alpha >0$, define the space $S^{0,\alpha}(X^2_b)$ as containing functions $u$ smooth in the interior of $X$ such that for some $\delta > 0$,
\begin{enumerate}[label = \roman*)]
\item $u$ vanishes to the order $\alpha + \delta$ on the left and right boundaries of $X^2_b$ (where $\theta = 0$ and $\theta = \pi/2$ respectively)
\item To the front face, $u$ has an expansion of the form 
\[ u(r, \theta, y, y') = u_0(\theta, y, y') + ru_1(\theta, y, y') + \cdots + r^k u_k(\theta, y, y') + r^{\alpha} u_{k+1}(r, \theta, y, y'),  \text{ } k = \lfloor \alpha \rfloor,  \] 
where \[ u_i \in \theta^{\alpha + \delta}(\pi/2 - \theta)^{\alpha + \delta}S^0([0,\pi/2] \times Y^2), \text{ } i = 0, ... , k   \] 
while \[ u_{k+1} \in  \theta^{\alpha + \delta}(\pi/2 - \theta)^{\alpha + \delta}S^0([0,1) \times [0,\pi/2] \times Y^2)  \]
\end{enumerate} 
\end{definition}

By virtue of the Schwartz kernel theorem, pseudodifferential operators are in one-to-one correspondence with their kernels(conormal distributions with singularity at diagonal), a fact that will be frequently exploited below. 
\begin{definition}
Let $ \Psi^{-\infty,\alpha}_b(X) $ denote the class of $b$-pseudodifferential operators with kernels in $S^{0,\alpha}(X^2_b)*^b\Omega_R$ where $^b\Omega_x = \Omega(^bT^*_xX)$ is the b-density bundle and $^b\Omega_R$ is lifted to the right and in local coordinates can be written as $\frac{dx'}{x'}dy'$.
\end{definition}
    
Generalizing this, a broader class of $m$th-order $b$-pseudodifferential operators can be defined. 
\begin{definition}
	We define $\Psi^{m,\alpha}_b(X)$ as the space of operators $P$ that are standard pseudodifferential operators in $\accentset{\circ}{X^2}$ and whose kernels $K_P$ satisfy the following:
	\begin{enumerate}[label = (\roman*)]
	   \item If $\phi \in C^{\infty}_0(X^2_b \setminus \bigtriangleup_b)$, $\phi K_P \in S^{0,\alpha}(X^2_b)$ 
	   \item If $\phi \in C^{\infty}_0(X^2_b)$ is supported in a neighborhood $\mathcal U \cong [0,1)_r \times \mathbb R_y^{n-1}\times \mathbb R^n_z$ off $ff \cap \bigtriangleup_b$, and $\mathcal U \cap \bigtriangleup_b \cong \{z=0\}$ then 
	       \[ \phi K_P = \int e^{i \langle z, \xi \rangle}a(r,y, \xi)d\xi\otimes \mu_R \]
	       where $a(r,y,\xi) \in S^{m,\alpha}([0,1)_r \times \mathbb R^{n-1}_y; \mathbb R^n)$ which means
	       \[a(r,y, \xi) = \sum_{l=0}^{k} r^la_l(y,\xi) + r^{\alpha}a_{k+1}(r,y,\xi), \text{ } k = \lfloor \alpha \rfloor \] 
	       where $a_l$ is a standard symbol of order m and $a_{k+1}$ is a symbol of order zero in $r$ and a standard symbol of order $m$ in $(y,\xi)$. 
	\end{enumerate}
\end{definition}

This allows us to define the ``small'' calculus as $\Psi^m_b(X) \coloneqq \bigcap\limits_{\alpha > 0}\Psi^{m,\alpha}_b(X)$
Moreover, the following characterizes the operators in the small calculus, 
\begin{lemma}
	We have $P\in \Psi^m_b(X)$ if and only if $K_P$ vanishes to infinite order \text{} (in Taylor series) to the left and right boundaries of $X^2_b$ while when supported on a patch near the ff $\cap \bigtriangleup_b$, 
	 $K_P = \int e^{i \langle z, \xi \rangle}a(r,y,\xi)d\xi, \text{ } a\in S^m([0,1) \times \mathbb R^{n-1}; \mathbb R^n)  $. 
\end{lemma}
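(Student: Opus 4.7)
The strategy is to unpack the definition $\Psi^m_b(X) = \bigcap_{\alpha>0} \Psi^{m,\alpha}_b(X)$ and observe that each of the two defining conditions of $\Psi^{m,\alpha}_b(X)$ collapses, under intersection over all $\alpha > 0$, into the asserted characterization. The reverse implication is essentially formal, so the real content is in the forward direction, which I would split according to whether the kernel is considered off the diagonal or on a patch near $\mathrm{ff} \cap \bigtriangleup_b$.

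For the off-diagonal part, fix a cutoff $\phi \in C^\infty_0(X^2_b \setminus \bigtriangleup_b)$. The $S^{0,\alpha}$ description says that $\phi K_P$ vanishes to order $\alpha + \delta$ on the left and right boundaries and admits a finite Taylor expansion $\sum_{i=0}^{\lfloor\alpha\rfloor} r^i u_i + r^\alpha u_{k+1}$ at the front face, with the weight factors $\theta^{\alpha+\delta}(\pi/2 - \theta)^{\alpha+\delta}$ controlling behavior at the side faces. Intersecting over $\alpha > 0$ forces the full Taylor series on the left and right boundaries to vanish identically (infinite-order vanishing), and produces a formal Taylor series in $r$ to all orders at the front face. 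A standard Borel summation argument realizes this formal series as honest smoothness in $r$ up to $\{r=0\}$.

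Near $\mathrm{ff} \cap \bigtriangleup_b$, the symbol condition gives $\phi K_P = \int e^{i\langle z,\xi \rangle} a^{(\alpha)}(r,y,\xi)\, d\xi \otimes \mu_R$ with $a^{(\alpha)} \in S^{m,\alpha}$. The Taylor coefficients $a_l(y,\xi)$ of $a^{(\alpha)}$ are uniquely determined by $K_P$ modulo $r$-vanishing remainders, hence independent of $\alpha$; they assemble into a single formal expansion that Borel-sums to a standard symbol $a \in S^m([0,1)_r \times \mathbb R^{n-1}_y; \mathbb R^n)$, smooth in $r$ up to $r=0$ and of order $m$ in $\xi$, as required. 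The reverse direction is immediate from the definitions: infinite-order boundary vanishing implies vanishing to order $\alpha + \delta$ for every $\alpha$, and an $r$-smooth symbol $a \in S^m$ is always Taylor-expandable to order $\lfloor \alpha \rfloor$ with an $r^\alpha$-remainder lying in the required class, verifying both conditions of $\Psi^{m,\alpha}_b(X)$ for every $\alpha > 0$.

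The one genuinely technical step is the Borel summation that converts ``Taylor expansion to arbitrary finite order with controlled remainder for each $\alpha$'' into ``genuine smoothness up to the front face''. This is a standard maneuver in the $b$-calculus (see Section 5.10 of \cite{Me}), but because the remainders $u_{k+1}$ and $a^{(\alpha)}_{k+1}$ are not uniquely specified by the data, one must first verify that the coefficient sequences produced for different values of $\alpha$ are mutually consistent; this consistency is forced by uniqueness of Taylor coefficients at the boundary and is the main (modest) obstacle I expect in turning this sketch into a full proof.
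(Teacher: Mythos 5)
The paper states this lemma without proof (it is treated as a standard fact from the $b$-calculus, cf.\ Melrose), so there is nothing to compare against line by line; your unpacking of the definition $\Psi^m_b(X)=\bigcap_{\alpha>0}\Psi^{m,\alpha}_b(X)$ is the standard argument and is essentially complete. The decomposition into the off-diagonal part and the patch near $\mathrm{ff}\cap\bigtriangleup_b$, the observation that intersecting over $\alpha$ forces infinite-order vanishing at the left and right boundaries (including, via the weights $\theta^{\alpha+\delta}(\pi/2-\theta)^{\alpha+\delta}$, at the corners of the front face), and the consistency of the Taylor coefficients across different $\alpha$ (unique because they are the Taylor coefficients of a fixed kernel, resp.\ of the essentially unique full symbol recovered by inverse Fourier transform in $z$) are all the right points to make. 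One correction of emphasis: the ``genuinely technical step'' you flag is not Borel summation. Borel's lemma goes from a formal series to a function realizing it; here you already have the function, and what you need is the elementary converse --- if for every $\alpha$ one has $u-\sum_{l\le\lfloloor\alpha\rfloor$ is replaced by $u-\sum_{l\le k} r^l u_l = r^\alpha u_{k+1}$ with $u_{k+1}\in S^0$ (so all $(r\partial_r)$- and tangential derivatives are $O(r^\alpha)$), then converting $(r\partial_r)^j$-bounds into $\partial_r^j$-bounds shows $u$ is $C^k$ in $r$ up to $r=0$ for every $k$, hence smooth. That step is routine and requires no summation device; with it, your sketch closes. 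The reverse implication is, as you say, immediate.
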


%% Section- Construction of \cal projector %%

\section{Construction of the \cal projector}

Recall that the Dirac-type operator $\D \in \text{Diff}^1_b(X)$ on the cylindrical end of $X$ has the following form,  \[ \D(x) = G(x\partial_x + \D_0(x))  \text{ where } \D_0(x) \in C^{\infty}([0,1]_x, \text{Diff}^1(Y,E_0)) \] Moreover, the assumption of smoothness on $\D_0$ implies the expansion \[ \D_0(x) \sim D_0 + xD_1 + x^2D_2 + \cdots  \text{ as } x \to 0 \] where $D_i \in \text{Diff}^1(Y,E_0)$.  \par In order to study the adiabatic limit of the \cal projector, we impose the following condition on $\D$,

%% Subsection Non-Resonance condition %%

\begin{definition}{(Nonresonance condition)}
Let $\mathcal H$ be the Cauchy data space for $\D$ and let $L_{<} \subset L^2(E_0)$ be the space spanned by eigenvectors corresponding to the negative eigenvalues of $D_0$. Then $\D$ is said to satisfy the \textit{nonresonance condition} if $\mathcal H \cap L_< = 0.$ 
\end{definition}
It was shown in \cite{N} that under this condition on $\D$, \[ \lim_{x \to 0} \mathcal H(x) = range(\Pi_{APS}), \text{    }\Pi_{APS} = \Pi_> + \Pi_{sc}  ,\] where $\Pi_>$ is the  projector on the positive eigenvalues of $D_0$ and $\Pi_{sc}$ is the `scattering Lagrangian'. 
Note that $\D$ is nonresonant if and only if the null space of $\D$ has no elements in $L^2$. This can be seen by expanding $f(u,y) \in C^{\infty}((-\infty,0]_u \times Y, E)$ in terms of the orthogonal basis of $D_0$ as $ f(u,y) =  \sum_{\lambda }e^{-u\lambda}f_{\lambda}(0)\phi_{\lambda}(y)$. The $L^2$ solutions then correspond to $\lm < 0$. For a detailed proof, see \cite{K}

Consider now the restriction to $ X_r \coloneqq ([r,1]_x \times Y) \cup M' $. Bundles and operators defined above for $X$ retain their properties on $X_r$ by virtue of restriction. As $X_r$ is compact, following \cite{S1} and \cite{BB}, the \cal projector for $\D(r)$, $ \mathcal P_{\D(r)}$ could be defined as 
\[  \mathcal P_{\D(r)}\phi(y) \coloneqq \lim_{r \downarrow x} \gamma_r {\widetilde{\D(r)}}^{-1}\delta(r-x)\otimes G\phi(y), \: \: \: \:  \forall  \phi(y) \in C^{\infty}(Y_r)  \]  
where $\widetilde{\D(r)}$ is the extension of $\D(r)$  defined on the invertible double. However our construction below differs from the usual construction as it doesn't rely on the double manifold for the exact inverse of $\D$. Rather, $\D$ being a $b$-differential operator, we recall the following construction from Melrose (\cite{M}).  

%%%% Parametrix of D %%%%%%
\subsection{Parametrix of $\D(r)$}
Based on the construction of resolvent, ${(\D^2 + \lambda)}^{-1}, \lambda \in \mathbb C \backslash \mathbb R$ in Melrose (Ch 6, \cite{M}) for an exact $b$-metric $\D$, as in present case, a resolvent for $\D$ could be derived. Explicitly, it is given as  \[(\D - \lm)^{-1} = R(\lm) + \frac{1}{\lm}\Pi_0,  \] 
where $\Pi_0$ is the projection on the $L^2$ null space and $R(\lm) \in \Psi^{-1, \lm}_b(X, E_0)$. \par 
Note that it follows from the nonresonance condition that $\Pi_0$ disappears. Moreover, from the meromorphic extension of the resolvent, taking $\lm\to0$ with the analytic continuation $\Im \lm > 0$, we get 
\begin{equation}\label{eq: r0}
R(0) = Q + K, \text{ where } Q \in \Psi^{-1}_b(X) \text{ and } K \in  \Psi^{-\infty,0}_b(X). 
\end{equation}
Here, $Q$ belongs to the `small' calculus and $K$ belongs to `big' calculus, here $\Psi^{-\infty}$. Equipped with the desired inverse of $\D$, a projector could be constructed using the definition mentioned above. 

%%%%% Adiabatic limit of the projector %%%%%%
\subsection{Adiabatic limit of the projector} 
The aim here is to study the projector corresponding to $R(0)$ on the Cauchy data spaces $\mathcal H(r) \subset L^2(Y_r)$ as a function of $r \in [0,1)$. Following the definition of the \cal projector and using the inverse of $\D(r)$ from previous section we define a (non unique) projection operator as 
\[ \mathcal P_{\D(r)} = \gamma_r Q \delta(x-r)\otimes G + \gamma_r K \delta(x-r)\otimes G. \] 
Denote the first term in the sum as $\mathcal P_Q(r)$ and the second one as $\mathcal P_K(r)$. Then we have the following for $\mathcal P_Q(r)$ where the proof closely follows Loya and Park, \cite{LP}
%%%Theorem P_Q is well defined %%%
\begin{theorem}
$\mathcal P_Q(r): C^{\infty}(Y) \to C^{\infty}(Y)$ is well defined and \[\mathcal P_Q(r) \in C^{\infty}([0,1)_r, \Psi^0(Y)) .\]
\end{theorem}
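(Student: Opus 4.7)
The strategy is to compute the Schwartz kernel of $\mathcal P_Q(r)$ in local coordinates near the slice $\{x=r\}\subset X$, identify it as a pseudodifferential operator of order $0$ on $Y$ for each fixed $r$, and then establish smooth dependence on $r\in[0,1)$ from the $b$-calculus structure of $Q$. First I would fix $r$ and work in a local coordinate chart near $(r,y_0)\in\{x=r\}\times Y$, using the characterization of $Q\in\Psi^{-1}_b(X)$ from Lemma 1: on a neighborhood of the $b$-diagonal near the front face, the kernel has the oscillatory integral representation
\[ K_Q(x,y,x',y') = \int e^{i\varphi\,\tau+i(y-y')\cdot\eta}\, q(x,y,\tau,\eta)\, d\tau\, d\eta\cdot \tfrac{dx'}{x'}dy',\qquad \varphi=\log(x/x'), \]
with amplitude $q\in S^{-1}$, while elsewhere $K_Q$ is smooth on $X^2_b$ and vanishes to infinite order at the left and right boundaries.

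Next I would compute the layer potential
\[ \bigl(Q(\delta(x'-r)\otimes G\phi)\bigr)(x,y) \;=\; \int K_Q(x,y,r,y')\, G\phi(y')\, dy' \]
and extract its one-sided limit as $x\downarrow r$. Substituting $x'=r$ into the oscillatory integral and performing the $\tau$-integration, the resulting amplitude develops a jump discontinuity at $x=r$; taking the limit from $x>r$ (equivalently, deforming the $\tau$-contour into the upper half-plane, justified by $\varphi=\log(x/r)>0$) produces an amplitude $p(r,y,\eta)\in S^0$, by the method used in Loya--Park \cite{LP} and the classical Calderón reduction of Booß-Bavnbek--Wojciechowski \cite{BB}. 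Consequently $\mathcal P_Q(r)\in\Psi^0(Y)$ for each fixed $r$, which in particular gives $\mathcal P_Q(r):C^{\infty}(Y)\to C^{\infty}(Y)$. The off-diagonal part of $K_Q$ restricts on $\{x=x'=r\}$ to a smoothing operator on $Y$.

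For smoothness of $r\mapsto\mathcal P_Q(r)$ as a map into $\Psi^0(Y)$, the key is that the coefficients of $q$ and the remainder terms depend smoothly on $x$ up to $x=0$, which is precisely the content of $Q$ lying in the small $b$-calculus. The family of slices $\{x=x'=r\}$ lifts in $X^2_b$ to a smooth family of submanifolds meeting the front face transversally at $\theta=\pi/4$, smoothly parameterized by $r\in[0,1)$; this forces both the symbol $p(r,y,\eta)$ and the smoothing remainder to depend smoothly on $r$, giving the required $C^\infty$ family in $\Psi^0(Y)$.

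The main technical obstacle will be maintaining uniform control down to $r=0$. For $r>0$ the argument is essentially local in a region where the $b$-structure is irrelevant and reduces to a standard Calderón projector computation, but at $r=0$ the slice meets the front face of $X^2_b$, so smoothness of $\partial_r^k\mathcal P_Q(r)$ uniformly up to $r=0$ relies crucially on the Taylor-smooth extension of $K_Q$ along the front face guaranteed by $Q\in\Psi^{-1}_b(X)$. One has to verify carefully that the $\tau$-integration and the limit $x\downarrow r$ commute with differentiation in $r$, so that the amplitude $p(r,y,\eta)$ and smoothing remainder are jointly smooth in $(r,y,\eta)$ down to $r=0$.
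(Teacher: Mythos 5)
Your proposal is correct and follows essentially the same route as the paper: representing $K_Q$ near the front face as an oscillatory integral in the Mellin/log-projective variable, setting $x'=r$, deforming the $\tau$-contour into the upper half-plane to take the one-sided limit $x\downarrow r$ and obtain an order-$0$ amplitude, and deducing smoothness in $r$ down to $r=0$ from the Taylor-smooth dependence of the small-calculus kernel on $x$ at the front face. The paper is merely more explicit at one step, expanding the symbol of $Q$ as a sum of rational functions $p_j/(\tau^2+\|\eta\|^2)^j$ and evaluating the contour integral by residues at $\tau=i\|\eta\|$ to exhibit the principal symbol $\tfrac12\bigl(Id+\sigma(\D_0)/\|\eta\|\bigr)$ of $\mathcal P_Q(r)$.
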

\begin{proof}
Consider $\mathcal P_Q(r)$ for a fixed small $r$ (as the `boundary' being at $r =0$) as an operator on $X_r$. Then on $X^2_b$, it is sufficient to consider $\mathcal P_Q$ in a coordinate patch $\mathcal U$ supported near front face, ff, such that $\mathcal U \cap \bigtriangleup_b \neq \emptyset$. Locally then, by choosing a projective coordinate system $\{(\frac{x}{x'}, x, y) \} \in \mathbb R_{(\frac{x}{x'})} \times [0,1)_x \times R^{n-1}_y, \mathcal P_Q(r)$ can be described as 
\begin{equation}\label{eq: pq}
\begin{split}
\mathcal P_Q(r)\phi(y) &= \lim_{x \downarrow r} \gamma_x Q {(\delta(r)\otimes G\phi)} \\
                                   &= (2\pi)^{-2n}\lim_{x \downarrow r} \int_{\mathbb R^{2n}} \left(\frac{x}{x'}\right)^{i\tau}e^{i\langle y, \eta \rangle} a(x,y,\tau,\eta) \delta(x'-r)\otimes G \phi(y')\frac{dx'}{x'}dy'd\tau d\eta \\
                                   &= (2\pi)^{-n}\lim_{x \downarrow r} \int_{\mathbb R^{n-1}}e^{i\langle y, \eta \rangle} \int_{\mathbb R} \left(\frac{x}{r}\right)^{i\tau}a(x,y,\tau,\eta) G\hat \phi(\eta)d\tau d\eta \\
                                   &= (2\pi)^{-n+1}\int_{\mathbb R^{n-1}}e^{i\langle y, \eta \rangle} \tilde a(r,y,\eta) G\hat \phi(\eta) d\eta 
\end{split}
\end{equation} 
where \[ \tilde a(r,y,\eta) =  (2\pi)^{-1}\lim_{x \downarrow r} \int_{\mathbb R} \left(\frac{x}{r}\right)^{i\tau}a(x,y,\tau,\eta) d\tau .\] In the second step above, kernel of $Q$ is expanded allowing the Fourier transform $\hat \phi(\eta)$ in $y' \to \eta$ in the third step. Action of $\delta(x' -r)$ on $(x/x')$ can be justified by approximating the Dirac measure in the usual way. For example, by choosing a sequence $u_{\epsilon}(t) \in C^{\infty}_0(\mathbb R)$, supported near $t=r$ and integrating to 1. Recall that $\mathcal P_Q(r)$ belongs to `small' calculus as mentioned following \eqref{eq: r0}. This implies that $a(x,y,\tau,\eta)$ is smooth in $x$ for $x \in [0,1)$

\par Now consider $\tilde a(r,y,\eta)$ and make the change of variables $x \to xr$. Then,
\begin{equation*}
\tilde a(r,y,\eta) =  \lim_{x \downarrow 1} \int_{\mathbb R} e^{i\langle ln x, \tau \rangle} a(rx,y,\tau,\eta) d\tau
\end{equation*}
As $Q \in \Psi^{-1}_b(X)$ it has rational symbol i.e. \[ a(rx,y,\tau,\eta) \sim \sum_{j=1}^{\infty} a_j(rx,y,,\tau,\eta) \]  where 
\[ a_j = \frac{p_j(rx,y,\tau,\eta)}{(\tau^2 + \| \eta \|_{g_Y}^2)^j} = \frac{p_j(rx,y,\tau,\eta)}{(\tau + i \| \eta \|_{g_Y})^j(\tau - i \| \eta \|_{g_Y})^j}, \]  
with $a_j\in C^{\infty}(T^*(X))$ of order $-j$ and $p_j$ is a polynomial of order $j$ in $(\tau,\eta)$. Moreover, the $b$-principal symbol of $\D$ is $a_1^{-1}(xr,y,\tau,\eta) = i\tau + \sigma(D_0)(\eta)$, where $\sigma(D_0)$ is the principal symbol of $D_0$ and by virtue of $D_0$ being Dirac-type, $\sigma^2(D_0)(\eta) = \| \eta \|_{g_Y}^2, \eta \in T^*(Y)$. Note that if $\Im \tau > 0 $ then $e^{i\langle ln x, \tau \rangle} \in O(\tau^{-N})$ for any $N$. This suggests that the integral can be replaced by a contour integral by shifting $\mathbb R$ over the roots of rational function $a(rx,y,\tau,\eta)$ lying in upper half of $\mathbb C$. Therefore let $\tau$ be moved to $\Im \tau = \infty$. Having the necessary holomorphic extension of $a_j$ for each $j$, application of Cauchy's theorem expresses $\tilde a(r,y,\eta)$ as a sum of residues. By abuse of notation, let $Q = \sum_{j=1}^{\infty}OP(a_j)$, where $OP(k)$ is the pseudo-differential operator with symbol $k$. This allows writing $\tilde a$ as a sum,
\begin{equation}
\begin{split}
\tilde a(r,y,\eta)  &=  (2\pi)^{-1}\lim_{x \downarrow 1} \int_{\mathbb R} e^{i\langle ln x, \tau \rangle} \sum_{j=1}^{\infty} a_j(rx,y,\tau,\eta) d\tau \\
                         &= (2\pi)^{-1}\lim_{x \downarrow 1} \sum_{j=1}^{\infty} \int_{\mathbb R}e^{i\langle ln x, \tau \rangle} \frac{p_j(rx,y,\tau,\eta)}{(\tau + i \| \eta \|_{g_Y})^j(\tau - i \| \eta \|_{g_Y})^j} d\tau \\
                         &= \lim_{x \downarrow 1} \sum_{j=1}^{\infty}\frac{i}{(j-1)!}\left.{{\left(\frac{d}{d\tau}\right)}^{j-1}}\right\vert_{\tau = i\| \eta \|_{g_Y}}\frac{p_j(rx,y,\tau,\eta)}{(\tau + i \| \eta \|_{g_Y})^j}\\
                         &= \frac{i\| \eta \|_{g_Y} + \sigma(\D_0)(r,y,\eta)}{2\| \eta \|_{g_Y}}   + \\
                         & \sum_{j=2}^{\infty}\frac{i}{(j-1)!}\left.{{\left(\frac{d}{d\tau}\right)}^{j-1}}\right\vert_{\tau = i\| \eta \|_{g_Y}}\frac{p_j(r,y,\tau,\eta)}{(\tau + i \| \eta \|_{g_Y})^j}  \\
                         &\eqqcolon  \tilde {a_0}(r,y,\eta) + \sum_{l=1}^{\infty} \tilde {a_l}(r,y,\eta)                              
\end{split} 
\end{equation}
It can be checked that $\tilde a_l(r,y,\eta)$ are symbols homogeneous of order $l$ in $\eta$. Therefore $\tilde a(r,y,\eta)$ is a symbol of order $0$. 
\par Now substituting $\tilde a_l(r,y,\eta)$ in \eqref{eq: pq} we get $\mathcal P_Q(r)$ as a well-defined pseudodifferential operator at the boundary $\{ r \} \times Y$ i.e. $\mathcal P_Q(r) \in \Psi^0(Y)$. Moreover, as 
\begin{equation}
\tilde a_0(r,y,\eta) = \frac{1}{2}\Big(Id +  \frac{\sigma(\D_0)(r,y,\eta)}{\| \eta \|_{g_Y}}\Big)
\end{equation}
the principal symbol of $\mathcal P_Q(r)$, $\tilde a_0(r,y,\eta)$, is an orthogonal projection on the space corresponding to positive eigenvalues of \[ \sigma(\D_0)(r,y,\eta): C^{\infty}(Y_r; E_r) \to C^{\infty}(Y_r; E_r). \] \par 
Finally, collecting the above and noting that smoothness of $a(x,y,\tau, \eta)$ in $x$ implying smoothness of $\tilde a_l(r,y,\eta)$ in $r$ proves that $\mathcal P_Q(r) \in C^{\infty}([0,1)_r, \Psi^0(Y)) $

\end{proof}

\begin{corollary}
The limit $\lim_{r \to 0^+} \mathcal P_Q(r)$ exists. 
\end{corollary}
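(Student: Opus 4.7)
The plan is essentially to invoke the theorem that was just proved: it asserts that $r \mapsto \mathcal P_Q(r)$ belongs to $C^{\infty}([0,1)_r, \Psi^0(Y))$, and since the half-open interval $[0,1)$ contains the endpoint $r=0$, smoothness there entails continuity at $r=0$ (the relevant one-sided limit being understood in the natural Fr\'echet topology on $\Psi^0(Y)$ coming from the symbol seminorms). This already yields
\[ \lim_{r \to 0^+} \mathcal P_Q(r) = \mathcal P_Q(0), \]
where the right-hand side makes sense precisely because of the boundary-inclusive regularity asserted in the theorem.

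If a more hands-on justification is desired, I would retrace the final lines of the proof above, where the full symbol of $\mathcal P_Q(r)$ was written as a convergent expansion $\tilde a(r,y,\eta) = \sum_{l \geq 0} \tilde a_l(r,y,\eta)$, with each $\tilde a_l$ homogeneous of degree $l$ in $\eta$ and obtained as a residue of $p_j(rx,y,\tau,\eta)/(\tau + i\|\eta\|_{g_Y})^j$ at $\tau = i\|\eta\|_{g_Y}$, evaluated at $x = 1$. The polynomials $p_j$ are the homogeneous pieces of the standard symbol of $Q$; since $Q \in \Psi^{-1}_b(X)$ lies in the small calculus, they are smooth in their base variable $rx \in [0,1)$ all the way down to zero. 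Therefore one may simply set $r = 0$ to obtain well-defined symbols $\tilde a_l(0,y,\eta)$, and reassembling them via \eqref{eq: pq} exhibits the limit explicitly as a pseudodifferential operator on $Y$.

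The only step that might be mistaken for an obstacle is a possible failure of uniformity near the ``end at infinity'' $x = 0$. But this is ruled out precisely by the smallness of $Q$: its kernel vanishes to infinite order at the left and right boundaries of $X^2_b$, so all the $r$-dependence of $\mathcal P_Q(r)$ is confined to the front-face behaviour that was already analyzed in the proof of the theorem. Hence no new estimate is needed, and the corollary reduces to the observation that evaluating a smooth $\Psi^0(Y)$-valued function at the boundary point $r=0$ is legitimate.
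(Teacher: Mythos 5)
Your argument is correct and is exactly what the paper intends: the corollary is an immediate consequence of the theorem's assertion that $\mathcal P_Q(r) \in C^{\infty}([0,1)_r, \Psi^0(Y))$, since smoothness up to the endpoint $r=0$ gives continuity there and hence the one-sided limit $\mathcal P_Q(0)$. The additional symbol-level verification you sketch is consistent with the proof of the theorem and adds nothing that the paper does not already contain.
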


%% Computation for K %%
A similar computation as above can be applied to the second term in \eqref{eq: r0}, $K$, to study its Poisson operator on $X_r$. 

\begin{proposition}
There is an $\epsilon > 0$ such that $\mathcal P_K(r) =  \mathcal P_{K_0}(0) + r^{\epsilon}\mathcal P_{K_1}(r)$, where $\mathcal P_{K_0} \in \Psi^{-\infty}(Y)$ and $\mathcal P_{K_1}(r) \in S^0([0,1)_r, \Psi^{-\infty}(Y))$. 
\end{proposition}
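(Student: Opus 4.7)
The plan is to recognize $\mathcal P_K(r)$ as the integral operator on $Y$ whose Schwartz kernel is the restriction of the lifted kernel of $K$ to the submanifold $\{x = x' = r\}$ of $X^2$, and then to exploit the front-face expansion built into the definition of $\Psi^{-\infty,0}_b(X)$ to split this restriction into an $r$-independent piece plus a remainder of order $r^\epsilon$. Since $K$ is smoothing, no Fourier analysis is required and the argument is considerably simpler than the one used for $\mathcal P_Q(r)$.

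First I would unwind the definition of $\mathcal P_K(r) = \gamma_r K\,\delta(x-r)\otimes G$ exactly as in the proof of the previous theorem, using the $b$-density normalization under which $\int \delta(x'-r)\,\tfrac{dx'}{x'} = 1$. Because the Schwartz kernel $k$ of $K$ is bounded and continuous up to the boundary (no diagonal singularity), the limit $\gamma_r$ can be evaluated pointwise, so $\mathcal P_K(r)$ is the integral operator on $Y$ with Schwartz kernel $k(r,y,r,y')\,G(r)$, where $G(r)$ is the smooth bundle map at $x = r$.

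Next I would lift to $X^2_b$. The slice $\{x = x' = r\}$ lifts, in polar coordinates $(\rho,\theta)$ at the corner, to $\{\theta = \pi/4,\ \rho = r\sqrt 2\}$, which stays bounded away from the left and right boundaries. By the definition of $S^{0,0}(X^2_b)$, near the front face the kernel admits an expansion
\[
k(\rho,\theta,y,y') = u_0(\theta,y,y') + \rho^\epsilon\, u_1(\rho,\theta,y,y'),
\]
with $u_0,u_1$ of symbol type of order zero; since $Y$ is closed, both are smooth in $(y,y')$. Restricting to $\theta = \pi/4$ and $\rho = r\sqrt 2$ yields $k(r,y,r,y') = u_0(\pi/4,y,y') + r^\epsilon\,\tilde u_1(r,y,y')$ with $\tilde u_1$ a symbol of order $0$ in $r$ valued in $C^\infty(Y^2)$.

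Finally, I would declare $\mathcal P_{K_0}(0)$ to be the smoothing operator on $Y$ with kernel $u_0(\pi/4,y,y')\,G(0)$ and set $\mathcal P_{K_1}(r) := r^{-\epsilon}\bigl(\mathcal P_K(r) - \mathcal P_{K_0}(0)\bigr)$; the smoothness of $G(r)$ together with $G(r) - G(0) = O(r)$ lets the $u_0$- and $u_1$-contributions be absorbed into a single symbolic remainder provided $\epsilon < 1$. I expect the main technical obstacle to be bookkeeping: checking that the $b$-density convention is applied consistently when interpreting $\delta(x-r)$ (so that no spurious $1/r$ factors appear), and that the $S^0$ estimates on $u_1$—formulated in terms of $b$-differentiations in $\rho$—translate, via the chain rule under $\rho = r\sqrt 2$, to genuine symbolic estimates in $r$ for the resulting operator-valued family. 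These are routine but careful verifications.
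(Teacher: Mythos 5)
Your proposal is correct and follows essentially the same route as the paper: both unwind $\gamma_r K\,\delta(x-r)\otimes G$ to the restriction of the lifted kernel of $K$ to the slice $x=x'=r$ (the paper using the projective coordinates $\bigl(\tfrac{x+x'}{2},\,x/x'\bigr)$ where you use polar coordinates, so your $\theta=\pi/4$, $\rho=r\sqrt2$ is the paper's $(r,1)$), and both then read off the decomposition $\mathcal P_{K_0}+r^{\epsilon}\mathcal P_{K_1}(r)$ directly from the front-face expansion $\tilde K = \tilde K_0 + \rho^{\epsilon}\tilde K_1$ built into the definition of $\Psi^{-\infty,0}_b(X)$. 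Your extra care about the $b$-density normalization of the delta and about $G(r)-G(0)=O(r)$ is harmless (the paper treats $G$ as $x$-independent and suppresses the density bookkeeping), so nothing essential differs.
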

\begin{proof}
As previously, it is sufficient to consider $\mathcal P_K(r)$ for small r. So assume that $\mathcal P_K(r)$ is supported in a coordinate patch $\mathcal U$ near front face, ff, such that $\mathcal U \cap \bigtriangleup_b \neq \emptyset$. In the patch $\mathcal U \subset X^2_b$ we can use the coordinate system be given by $ \{(\frac{x + x'}{2}), x/x', y)\}$. Denoting the Schwartz kernel of $K$ as $\tilde K$, we have
\begin{equation}
\begin{split}
\mathcal P_K(r)\phi(y) &= \gamma_r K \delta(x-r)\otimes G\phi(y) \\
                          &= \gamma_r \int_{[0,1) \times \mathbb R^{n -1}} \tilde K\Big(\frac{x + x'}{2}, x/x', y, y' \Big) \delta(x' -r) \otimes G\phi(y')\frac{dx}{x'}'dy' \\
                          &= \gamma_r \int_{\mathbb R^{n-1}} \tilde K\Big(\frac{x + r}{2}, x/r, y, y' \Big) G\phi(y')dy' \\
                          &= \int_{\mathbb R^{n-1}} \tilde K\Big(r, 1, y, y' \Big) G\phi(y')dy' 
\end{split}
\end{equation}
Note that $K$ is a \textit{b}-smoothing operator, so $\gamma_r $ is the usual restriction without needing to take limits. Now as $K \in \Psi_b^{-\infty,0}(X)$, $\tilde K$ has an expansion to the front face, $\{r=0\}$, given as \[ \tilde K(r, y, y') =  \tilde K_0(y,y') + r^{\epsilon}\tilde K_1(r,y,y'), \text{ for some } \epsilon > 0. \] Here the kernels $\tilde K_i \in S^0(X^2_b)$. This implies 
\begin{equation}
\mathcal P_K(r)\phi = \mathcal P_{K_0}\phi + r^{\epsilon} \mathcal P_{K_1}(r)\phi
\end{equation}
where \[ \mathcal P_{K_0}(r)\phi(y) = \int_{\mathbb R^{n-1}} \tilde K_0\Big(y, y' \Big) G\phi(y')dy'  \in  \Psi^{-\infty}(Y) \] and  \[ \mathcal P_{K_1}(r) = \int_{\mathbb R^{n-1}} \tilde K_1\Big(r, y, y' \Big) G\phi(y')dy'    \in S^0([0,1)_r, \Psi^{-\infty}(Y)) \]
\end{proof}

As a consequence of the above results on $\mathcal P_Q(r)$ and $\mathcal P_K(r)$ it follows that $\forall r \in [0,1)$ there is an $\epsilon > 0$ such that the following holds for the projector corresponding to $R(0)$:
\begin{equation}\label{eq: Pr}
\mathcal P_{\D(r)} = \mathcal P_{Q}(r) + r^{\epsilon}\mathcal P_K(r),
\end{equation}
where $\mathcal P_{Q}(r) \in C^{\infty}([0,1), \Psi^0(Y))$ and $\mathcal P_{K}(r) \in S^0([0,1), \Psi^{-\infty}(Y))$. \par

Note that having $\mathcal P_{R}(r)$, the \cal projector for the adjoint $\D^* = (-x\partial_x + \D_0)G$,  $\mathcal P^*(\D)$ could be constructed in a similar manner as above.  
%% Subsection- Consequences %%

\subsection{Consequences} 
Although $\mathcal P_{D(r)}$ is a projector it however is not an orthogonal projector, something which is needed to relate to $\Pi_{APS}$. But it can be turned into an orthogonal operator in the standard way (see \cite{BB})
%Lemma%
\begin{lemma}
Given $\mathcal P_{\D(r)}$, an orthogonal projection is given as \[  \mathcal C(r) \coloneqq \mathcal P_{\D(r)}\mathcal P^*_{\D (r)}{[\mathcal P_{\D(r)}\mathcal P^*_{\D(r)} + (Id - \mathcal P_{\D(r)})(Id - \mathcal P^*_{\D(r)}) ]}^{-1} \in \Psi^{0}(Y) \]  Moreover, for some $\epsilon > 0, \mathcal C(r) = \mathcal C_1(r) + r^{\epsilon}\mathcal C_2(r)$, $\mathcal C_1(r) \in C^{\infty}([0,1), \Psi^0(Y))$ and \\$\mathcal C_2(r) \in S^0([0,1), \Psi^{-\infty}(Y))$. 
\end{lemma}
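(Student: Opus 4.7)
The plan breaks into three ingredients. First, I verify the standard algebraic/functional analytic fact underlying the formula: the identity $\langle A(r)\phi, \phi\rangle = \|\mathcal P^*_{\D(r)}\phi\|^2 + \|(\mathrm{Id}-\mathcal P^*_{\D(r)})\phi\|^2$ shows that the bracketed operator $A(r) := \mathcal P_{\D(r)}\mathcal P^*_{\D(r)} + (\mathrm{Id}-\mathcal P_{\D(r)})(\mathrm{Id}-\mathcal P^*_{\D(r)})$ is self-adjoint and positive definite on $L^2(Y)$, hence $L^2$-invertible. Once $A(r)^{-1}$ exists, the identities $\mathcal C(r) = \mathcal C(r)^*$, $\mathcal C(r)^2 = \mathcal C(r)$, and $\mathrm{range}(\mathcal C(r)) = \mathrm{range}(\mathcal P_{\D(r)})$ follow by the direct calculation of \cite{BB} using $\mathcal P_{\D(r)}^2 = \mathcal P_{\D(r)}$.

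The main analytic step is to upgrade $L^2$-invertibility of $A(r)$ to invertibility inside $\Psi^0(Y)$. By the preceding theorem, the principal symbol of $\mathcal P_Q(r)$ is $p(r,y,\eta) = \tfrac{1}{2}\bigl(\mathrm{Id} + \sigma(\D_0)(r,y,\eta)/\|\eta\|_{g_Y}\bigr)$, which satisfies $p = p^* = p^2$ because $\sigma(\D_0)$ is self-adjoint and $\sigma(\D_0)^2 = \|\eta\|_{g_Y}^2 \mathrm{Id}$. Since $\mathcal P_K(r) \in \Psi^{-\infty}$, the principal symbol of $\mathcal P_{\D(r)}$ is also $p$, and therefore
\[
\sigma_0(A(r)) = p p^* + (\mathrm{Id}-p)(\mathrm{Id}-p^*) = p + (\mathrm{Id}-p) = \mathrm{Id}.
\]
Thus $A(r)$ is elliptic of order zero, and the standard parametrix construction combined with its $L^2$-invertibility yields $A(r)^{-1} \in \Psi^0(Y)$, giving $\mathcal C(r) \in \Psi^0(Y)$.

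For the decomposition I substitute \eqref{eq: Pr} together with the analogous expansion of $\mathcal P^*_{\D(r)}$ into both $A(r)$ and $\mathcal P_{\D(r)}\mathcal P^*_{\D(r)}$. Since $\Psi^{-\infty}$ is a two-sided ideal, every cross term carrying a factor of $\mathcal P_K$ or $\mathcal P^*_K$ is smoothing, yielding
\[
A(r) = A_0(r) + r^\epsilon R(r), \qquad \mathcal P_{\D(r)}\mathcal P^*_{\D(r)} = \mathcal P_Q(r)\mathcal P^*_Q(r) + r^\epsilon S(r),
\]
where $A_0(r) := \mathcal P_Q(r)\mathcal P^*_Q(r) + (\mathrm{Id}-\mathcal P_Q(r))(\mathrm{Id}-\mathcal P^*_Q(r))$ lies in $C^\infty([0,1), \Psi^0(Y))$ and $R(r), S(r) \in S^0([0,1), \Psi^{-\infty}(Y))$. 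The same symbol argument applied to $A_0(r)$ gives $A_0(r)^{-1} \in C^\infty([0,1), \Psi^0(Y))$, and a Neumann expansion
\[
A(r)^{-1} = A_0(r)^{-1}\bigl(\mathrm{Id} + r^\epsilon R(r)A_0(r)^{-1}\bigr)^{-1} = A_0(r)^{-1} + r^\epsilon T(r),
\]
in which every term past the first retains a smoothing factor, places $T(r)$ in $S^0([0,1), \Psi^{-\infty}(Y))$. Multiplying $\mathcal P_{\D(r)}\mathcal P^*_{\D(r)}$ by $A(r)^{-1}$ and collecting orders of $r^\epsilon$ gives $\mathcal C(r) = \mathcal C_1(r) + r^\epsilon \mathcal C_2(r)$ with $\mathcal C_1(r) := \mathcal P_Q(r)\mathcal P^*_Q(r) A_0(r)^{-1} \in C^\infty([0,1), \Psi^0(Y))$ and $\mathcal C_2(r) \in S^0([0,1), \Psi^{-\infty}(Y))$.

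The main obstacle is checking that the Neumann series for $(\mathrm{Id} + r^\epsilon R A_0^{-1})^{-1}$ not only converges but produces a remainder genuinely in the symbol class $S^0([0,1), \Psi^{-\infty}(Y))$ rather than merely a bounded operator, and that the exponent $\epsilon>0$ inherited from Proposition 1 survives these compositions. Convergence is immediate for $r$ small by operator-norm smallness of $r^\epsilon R(r) A_0(r)^{-1}$; membership in the symbol class is secured by the ideal property of $\Psi^{-\infty}$ together with the $S^0$-regularity in $r$ of $\mathcal P_K$. For $r$ bounded away from $0$, positivity and ellipticity of $A(r)$ with smooth parameter dependence give a smooth $A(r)^{-1}$ directly, and the two regimes match since $\mathcal C(r)$ is uniquely determined by $\mathcal P_{\D(r)}$ and $A(r)^{-1}$.
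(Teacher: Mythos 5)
Your proposal is correct and follows essentially the same route as the paper: the paper invokes the standard orthogonalization of an idempotent from \cite{BB}/\cite{LP} and then asserts that the algebra $\mathcal A = C^{\infty}([0,1),\Psi^0(Y)) + r^{\epsilon}S^0([0,1),\Psi^{-\infty}(Y))$ is closed under adjoints and inverses. You simply supply the details the paper leaves implicit — the quadratic-form positivity of $A(r)$, the principal-symbol computation $\sigma_0(A(r))=\mathrm{Id}$ giving $A(r)^{-1}\in\Psi^0(Y)$, and the Neumann-series/ideal-property argument that keeps the remainder in $r^{\epsilon}S^0([0,1),\Psi^{-\infty}(Y))$.
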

\begin{proof}
As both $\mathcal P_{\D(r)}$ and $\mathcal P^*_{\D}$ are projection operators on $L^2$, it's clear that $\mathcal C(r)$ is a projection on $Ran (\mathcal P_{\D(r)}), \mathcal C(r) \in \Psi^0(Y)$, and $\mathcal C^*(r) = C(r)$ (see \cite{LP}). The expansion of $\mathcal C(r)$ essentially follows from \eqref{eq: Pr} and a similar expansion of $\mathcal P^*_{\D (r)}$. It can be verified that \[ \mathcal A \coloneqq C^{\infty}([0,1), \Psi^0(Y)) + r^{\epsilon}S^0([0,1), \Psi^{-\infty}(Y)) \] is an Algebra that is closed under adjoints and inverses. It follows that \[  T \coloneqq {[\mathcal P_{\D(r)}\mathcal P^*_{\D(r)} + (Id - \mathcal P_{\D(r)})(Id - \mathcal P^*_{\D(r)}) ]}^{-1} \in \mathcal A \] and therefore $\mathcal P_{\D(r)}\mathcal P^*_{\D (r)}T \in \mathcal A$.

\end{proof}

As $\mathcal C_1(r)$ is smooth in $r$, the above can be rewritten as $\mathcal C(r) = {\mathcal C_1}(0) + r^{\epsilon} {\mathcal C_3}(r)$ where ${ \mathcal C_3}(r)\in S^0([0,1)_r, \Psi^0(Y))$. The non-resonance condition now implies that \[ \lim_{r \to 0} \mathcal C(r) = {\mathcal C_1}(0) = \Pi_{APS}.  \] \par 
Consider now the product case, so let $\D = G(x\partial_x + \D_0)$ where $\D_0$ is independent of $x$. As $\mathcal C(r)$ and $\Pi_{APS}$ have the same principal symbol, $\mathcal C(r) - \Pi_{APS} \in \Psi^{-1}(Y)$.  However, this result can be sharpened to have an error given by smoothing operator. We restate the result from Grubb(\cite{Gr})

\begin{lemma}
If $\D = G(x\partial_x + \D_0)$ then for any $r \in (0,1)$, $\mathcal C(r) - \Pi_{APS} \in \Psi^{-\infty}(Y)$.
\end{lemma}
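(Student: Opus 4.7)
The plan is to establish that $\mathcal C(r)$ commutes with $\D_0$ modulo $\Psi^{-\infty}(Y)$ in the product case, and then to invoke a spectral-uniqueness argument to identify it with $\Pi_{APS}$ up to a smoothing operator.

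First I would derive the commutation. The Dirac-type structure forces the Clifford-like anti-commutation $G\D_0 + \D_0 G = 0$ (a consequence of $\D^2$ being a generalized Laplacian), so in the product case $\D = G(x\partial_x + \D_0)$ with $x$-independent $\D_0$ one computes $\D\D_0 + \D_0\D = 0$. The small-calculus parametrix $Q$ of $\D$ from equation \eqref{eq: r0} inherits this anti-commutation modulo smoothing, so that $Q\D_0 + \D_0 Q$ is smoothing. Chasing the anti-commutations $Q\D_0 = -\D_0 Q$ and $\D_0 G = -G\D_0$ through the definition $\mathcal P_Q(r)\phi = \gamma_r Q(\delta(x-r)\otimes G\phi)$, the two sign flips cancel, giving $\D_0 \mathcal P_Q(r) = \mathcal P_Q(r)\D_0$ modulo $\Psi^{-\infty}(Y)$. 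Since $\mathcal P_K(r) \in \Psi^{-\infty}(Y)$ by Proposition 1 and the orthogonalization of Lemma 2 is built solely from products, adjoints, and inverses, $\mathcal C(r)$ inherits the same commutation with $\D_0$ modulo $\Psi^{-\infty}(Y)$.

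Next I would apply a spectral-uniqueness argument. Both $\mathcal C(r)$ and $\Pi_{APS}$ are orthogonal projections commuting with $\D_0$ (the latter trivially, since $\Pi_>$ is a spectral projection and $\Pi_{sc}$ is supported on $\ker \D_0$), and both share the principal symbol $\frac{1}{2}(I + \sigma(\D_0)/|\eta|)$. Since $\D_0$ is self-adjoint elliptic on the closed manifold $Y$, its eigenspaces $V_\lambda$ are finite-dimensional with $|\lambda| \to \infty$, and any $\D_0$-commuting orthogonal projection restricts to an orthogonal sub-projection on each $V_\lambda$. The principal-symbol agreement, combined with the microlocal concentration of high-eigenvalue eigenvectors $\phi_\lambda$ in the $\pm 1$ eigenspaces of $\sigma(\D_0)/|\eta|$, forces the rank of $\mathcal C(r)|_{V_\lambda}$ to match that of $\Pi_>|_{V_\lambda}$ for all $|\lambda|$ above some threshold $\lambda_0$; since the latter is either $\dim V_\lambda$ (for $\lambda > 0$) or $0$ (for $\lambda < 0$), the two restricted projections actually coincide. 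The remaining discrepancy is supported on the finite-dimensional subspace $\bigoplus_{|\lambda| \leq \lambda_0} V_\lambda$, together with a finite-rank correction on $\ker \D_0$ coming from $\Pi_{sc}$; hence $\mathcal C(r) - \Pi_{APS}$ is of finite rank and belongs to $\Psi^{-\infty}(Y)$.

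The main obstacle will be the spectral-uniqueness step: rigorously showing that for sufficiently large $|\lambda|$, an orthogonal projection that commutes with $\D_0$ and has the prescribed principal symbol must restrict to either the identity or zero on $V_\lambda$ according to the sign of $\lambda$. This rests on the microlocal behavior of eigenfunctions of a first-order elliptic self-adjoint operator on a closed manifold, combined with the rank rigidity of orthogonal projections close in operator norm, which together pin down the restricted projection on all but finitely many eigenspaces.
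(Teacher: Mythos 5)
Your strategy is genuinely different from the paper's (which follows Grubb: glue the explicit cylinder parametrix to an interior parametrix with cutoffs and control the error by Sobolev estimates), but as written it has a real gap at the hinge between your two steps. The spectral-uniqueness argument needs \emph{exact} commutation of $\mathcal C(r)$ with $\D_0$: only then does $\mathcal C(r)$ preserve each eigenspace $V_\lambda$ and restrict there to an honest orthogonal projection, to which rank rigidity can be applied. What your first step delivers is at best commutation modulo $\Psi^{-\infty}(Y)$, and a projection commuting with $\D_0$ only modulo smoothing does not leave the $V_\lambda$ invariant. One can try to repair this by estimating the off-diagonal spectral blocks $E_\lambda\,\mathcal C(r)\,E_\mu=(\lambda-\mu)^{-1}E_\lambda[\D_0,\mathcal C(r)]E_\mu$ and showing the diagonal blocks are approximate projections, but this runs into the absence of any lower bound on eigenvalue gaps of $\D_0$ and requires a further argument that a self-adjoint approximate projection uniformly close to $0$ or $I$ is rapidly close to $0$ or $I$; none of this is supplied. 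The symptom that something is off is your conclusion that $\mathcal C(r)-\Pi_{APS}$ is \emph{finite rank}: that is an artifact of pretending the commutation is exact, and is almost certainly false --- the Cauchy data space of $X_r$ is a compact, not finite-rank, perturbation of $\mathrm{range}(\Pi_{APS})$. (The minor parts of the spectral step are fine: since $\Pi_>|_{V_\lambda}$ is $0$ or $I$, an orthogonal projection within distance $<1$ of it must equal it, so compactness of $\mathcal C(r)-\Pi_{APS}$ would suffice \emph{if} the restrictions were genuine projections.)

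There is a second, related problem in the first step. The operator $\D_0$ acts on sections over $Y$ and extends only over the cylindrical collar, not over $M'$, so the identity $\D\D_0+\D_0\D=0$ holds only on the collar and the statement ``$Q\D_0+\D_0Q$ is smoothing'' for the \emph{global} parametrix $Q$ does not parse without cutoffs. Once you localize, the commutator picks up terms supported where the cutoffs vary, and showing these contribute only smoothing errors to $\gamma_r Q\,\delta(x'-r)\otimes G$ is precisely the cylinder/interior gluing estimate that the paper's cited proof performs --- at which point that gluing already yields $\mathcal C(r)=\Pi_{APS}+\Psi^{-\infty}(Y)$ directly, making the spectral detour unnecessary. (You also invoke $G\D_0+\D_0G=0$, which is standard in the Boo\ss-Bavnbek--Wojciechowski framework but is nowhere assumed in this paper and should be stated as a hypothesis.) To make your route work you would need to (i) prove the localized anticommutation estimate for $Q$ near the boundary, and (ii) replace the eigenspace-by-eigenspace argument with a quantitative block-decay argument that tolerates commutation only modulo $\Psi^{-\infty}$, or else upgrade the commutation to an exact one for a suitable model operator.
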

The idea of proof is to glue the parametrices on the cylinder and interior using smooth compactly supported functions as in \cite{APS} and establishing Sobolev estimates. 

As a consequence of above and collecting the results we have our main theorem describing the \textit{adiabatic limit} of the \cal projector
\begin{theorem}
Assuming the non-resonance condition, there is an $\epsilon > 0$ such that as $r \downarrow 0 $ we have \[ C(r) - \Pi_{APS} = r^{\epsilon}f(r), where \]
\begin{enumerate}[label = (\roman*)]
\item $f(r) \in S^0([0,1)_r, \Psi^0(Y))$ in the case of an asymptotically cylindrical end. 
\item $f(r) \in S^0([0,1)_r, \Psi^{-\infty}(Y)$ in the product case, $\D = G(x\partial_x + \D_0)$.
\end{enumerate}
\end{theorem}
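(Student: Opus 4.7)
The plan is to assemble the theorem from three ingredients already in place: the decomposition of $\mathcal C(r)$ from Lemma 2, the identification of its boundary value forced by the nonresonance condition, and, for the product case, the smoothing improvement recorded in Lemma 3.

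For part (i), I would start from the rewrite $\mathcal C(r) = \mathcal C_1(0) + r^{\epsilon}\mathcal C_3(r)$ obtained by Taylor-expanding the smooth family $\mathcal C_1 \in C^{\infty}([0,1)_r, \Psi^0(Y))$ at the origin and absorbing the $O(r)$ remainder into the $r^{\epsilon}$ term, shrinking $\epsilon$ to $\min(\epsilon,1)$ if necessary, so that $\mathcal C_3 \in S^0([0,1)_r, \Psi^0(Y))$. The nonresonance condition combined with the limit statement quoted from Nicolaescu forces $\lim_{r \downarrow 0}\mathcal C(r) = \Pi_{APS}$, whence $\mathcal C_1(0) = \Pi_{APS}$. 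Setting $f := \mathcal C_3$ then yields $\mathcal C(r) - \Pi_{APS} = r^{\epsilon}f(r)$ with the regularity claimed in (i).

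For part (ii), I would combine the identity from (i) with Lemma 3, which asserts that in the product case $\mathcal C(r) - \Pi_{APS} \in \Psi^{-\infty}(Y)$ for each $r \in (0,1)$. Pointwise in $r>0$ this forces $f(r) \in \Psi^{-\infty}(Y)$. The main obstacle, and the place where the product structure genuinely has to be exploited, is upgrading this pointwise smoothing property to $f \in S^0([0,1)_r, \Psi^{-\infty}(Y))$, i.e.\ controlling every $\Psi^{-\infty}$-seminorm of $f(r)$ as a symbol in $r$ uniformly down to the boundary. To handle this I would revisit the constructions of $\mathcal P_Q(r)$ and $\mathcal P_K(r)$ under the assumption $\D_0(x) = D_0$: the resolvent $R(0)$ then admits an explicit description through the spectral resolution of $D_0$, and Grubb's patched parametrix on $X_r = ([r,1]\times Y)\cup M'$ depends on $r$ only through a cylindrical translation and compactly supported cutoff functions. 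Both operations give $C^{\infty}$, hence $S^0$, $r$-dependence on the smoothing remainder, which together with the pointwise statement from Lemma 3 yields $f \in S^0([0,1)_r, \Psi^{-\infty}(Y))$ and completes the argument.
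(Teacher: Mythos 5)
Your proposal follows essentially the same route as the paper, which obtains the theorem by collecting the orthogonalization lemma (giving $\mathcal C(r) = \mathcal C_1(0) + r^{\epsilon}\mathcal C_3(r)$ with $\mathcal C_3 \in S^0([0,1)_r,\Psi^0(Y))$), the nonresonance identification $\mathcal C_1(0)=\Pi_{APS}$, and Grubb's lemma for the smoothing improvement in the product case. If anything you are more careful than the paper: the text simply ``collects the results,'' whereas you correctly flag, and sketch how to close, the gap between the pointwise statement $\mathcal C(r)-\Pi_{APS}\in\Psi^{-\infty}(Y)$ and the uniform-in-$r$ claim $f\in S^0([0,1)_r,\Psi^{-\infty}(Y))$ in part (ii).
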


%% END %% 

\clearpage

\phantomsection

\vspace{4mm}

%References

\end{document}